\definecolor{Grey}{rgb}{.5,.5,.5}
\definecolor{LightBlue1}{rgb}{.2,.4,0.9}
\definecolor{LightBlue2}{rgb}{.3,.5,0.9}
\definecolor{LightBlue3}{rgb}{.4,.6,0.9}
\definecolor{LightBlue4}{rgb}{.5,.7,.9}
\definecolor{LightBlue5}{rgb}{.6,.8,.9}
\definecolor{LightBlue6}{rgb}{.7,.9,.9}
\definecolor{Red}{rgb}{.9,.0,.0}
\definecolor{LightRed1}{rgb}{0.9,.2,.4}
\definecolor{LightRed2}{rgb}{0.9,.3,.5}
\definecolor{LightRed3}{rgb}{0.9,.4,.6}
\definecolor{LightRed4}{rgb}{.9,.5,.7}
\definecolor{LightRed5}{rgb}{.9,.6,.8}
\definecolor{LightRed6}{rgb}{.9,.7,.9}
\newcounter{noalgo}[section]
\newdimen\indentalgo
\newdimen\indentalgodec\indentalgo=0.0mm\indentalgodec=10mm
\newcommand{\If}{\advance\indentalgo by \indentalgodec {\bf if }}
\newcommand{\For}{\global\advance\indentalgo by \indentalgodec {\bf for }}
\newcommand{\Endindent}{\global\advance\indentalgo by -\indentalgodec}
\newdimen\decalage \decalage=0.5cm
\newcounter{algo} \setcounter{algo}{0}
\def\<<{\leavevmode
  \raise0.28ex\hbox{$\scriptscriptstyle\langle\!\langle$}\nobreak
  \hskip -.6pt plus.3pt minus.2pt\,}
\def\>>{\,\nobreak\hskip -.6pt plus.3pt minus.2pt
  \raise0.28ex\hbox{$\scriptscriptstyle\rangle\!\rangle$}}
\def\<<{\leavevmode
  \raise0.28ex\hbox{$\scriptscriptstyle\langle\!\langle$}\nobreak
  \hskip -.6pt plus.3pt minus.2pt\,}
\def\>>{\,\nobreak\hskip -.6pt plus.3pt minus.2pt
  \raise0.28ex\hbox{$\scriptscriptstyle\rangle\!\rangle$}}
\newtheorem{cor}{Corollary}
\newtheorem{theorem}{Theorem}
\newtheorem{lemma}{Lemma}
\def\ZZ{{\mathbb Z}}
\def\SSIG{{\mathop{\sum\!'}\nolimits}}
\providecommand{\myproofname}{Proof}
\begin{document}

\title{Bad witnesses for a composite number}

\author{Johnathan Djella Legnongo}
\address{Johnathan Djella, UFR of Science and Technology, Assane
Seck University of Ziguinchor, BP 523, Senegal.}
\email{johndjella@gmail.com}

\author{Tony Ezome}
\address{Tony Ezome, Laboratoire de Recherche en Math\'ematiques
et Applications (LAREMA),
\'Ecole Normale Sup\'erieure (ENS),
BP 17 009 Libreville, Gabon.}
\email{tony.ezome@gmail.com}

\author{Florian Luca}
\address{Florian Luca, School of Mathematics, University of
the Witwatersrand, Private Bag 3, Wits 2050, South Africa.}
\email{Florian.Luca@wits.ac.za}

\maketitle

\begin{abstract}
We describe the 
average sizes of the set of bad witnesses for
a pseudo-primality test which is the product
of a multiple-rounds Miller-Rabin test by the Galois test.\\
\end{abstract}

\noindent \small{Keywords: Fermat test, Miller-Rabin test, Galois test, probable prime.}\\
\small{2020 Mathematics Subject Classification: 11Y11 (primary); 11A51}

\vspace{.75cm}

\section{Introduction}

One of the simplest questions we have 
in mind, when handling 
a large number, is to know whether this number is prime or composite.
There are of course several methods
allowing to decide primality.
The most important algorithms are divided into two subfamilies:
primality tests and pseudo-primality tests.
We refer to \cite[Chapters 8 and 9]{Cohen1}, \cite{Ezome}
and \cite{Schoof}] for surveys on
known algorithms, from oldest to most recent.
Actually, we are talking about a very long-standing mathematical
problem which was already addressed by Euclid's Elements.
Closer to our time, the Fermat's little theorem gives rise to a
pseudo-primality test which studies
the primality of an odd integer $n$ by checking the congruence
$x^{n-1}\equiv 1 \pmod n$ whenever an integer $x$
relatively prime to $n$ is randomly chosen.
If the congruence is false, then $n$ is obviously composite.
When the congruence is true, one can only conclude that $n$ is probably prime.
Indeed, Carmichael numbers are composite
numbers for which the previous congruence is true
for any $x$. However,
Alford, Granville and Pomerance proved in
\cite{Alford-Granville-Pomerance} that there are
infinitely many Carmichael numbers.
This compromises the reliability of the Fermat test.
The most commonly used algorithm
in practice for prime detection is the Miller-Rabin pseudo-primality test.
This algorithm is an improvement of the Fermat test
resulting from the work by Artjuhov \cite{Artjuhov},
Miller \cite{Miller} and Rabin \cite{Rabin}.
Setting $n-1 = 2^km$ with $m$ odd, we say that $n$ passes a
Miller-Rabin test if one of the congruences
 \begin{equation}\label{eq:1}
 x^m \equiv 1 \pmod n, \ \text{ or } \
x^{2^im} \equiv -1 \pmod n \ \text{ for some } \  i<k
\end{equation}
holds with an integer $x$ relatively prime to $n$ chosen randomly.
Such an $x$ is a {\it witness} of the pseudo-primality of $n$
with respect to the Miller-Rabin test. One says that $x$ is a
\textit{ bad witness} if it satisfies
one of the above congruences 
while $n$ is in fact composite.
The density of bad witnesses is an important characteristic
for a pseudo-primality test as it measures  the reliability of this test.
It is proved 
\cite[Proof of theorem 2.1]{Schoof} that
the Miller-Rabin test is very effective in the case when 
$n$ has many prime divisors.
A pseudo-primality test called the \textit{Galois test}
which is efficient when the integer
to be tested has only very few prime divisors
has been constructed in \cite{Couv.Ezo.Ler}.
At the end of that paper,
the authors deduced a stronger pseudo-primality test
which is the product of \textit{a multiple-rounds Miller-Rabin test}
($i.e$ running several Miller-Rabin tests at the same time)
by a Galois test. This product test
takes advantages of strengths of each of its components,
especially in the extreme cases when the integer to be tested
has either many or very few prime divisors.
However, nothing is known yet about the average case analysis.
A first step in this direction has been made by Erd\"os and Pomerance
in \cite{Erdos-Pomerance}, they
focused on the special case of a one-round Miller-Rabin test.

The present paper describes the average
sizes of the set $\mathbf{Str}(n)$ 
of bad witnesses of an odd number $n$ with respect
to the stronger test proposed in \cite{Couv.Ezo.Ler}.
We start by studying the set $\mathbf{Gal}(n)$ of bad witnesses
for the Galois test in Section \ref{sect:2}.
We recall some well known results
concerning $\mathbf{Gal}(n)$ in \ref{subsect:1}. Then,
we focus on the arithmetic and geometric
mean values of its cardinality denoted by
$Gal(n)$ in Subsections \ref{subsect:2} and \ref{subsect:3}.
Section \ref{sect:3} is devoted to the stronger test.
We first describe the average numbers of bad witnesses
for a multiple-rounds Miller-Rabin test. And then, we specify the
case of the stronger test.

\subsection*{Acknowledgments}
This study has been carried out with financial support from the French State,
managed by CNRS in the frame of the \textit{Dispositif de Soutien aux
Collaborations avec l'Afrique subsaharienne}
(via the REDGATE Project and the IRN AFRIMath).
The first two authors were
supported by Simons Foundation via the PREMA project.
The first author is grateful to EMS-Simons for Africa
for their support to his PhD education.\\

\section{Bad witnesses of the Galois test}\label{sect:2}

In this section, we compute functions that bound from above and from
below the arithmetic
mean of the number of bad witnesses for the Galois test,
we also specify its geometric mean.
We start by recalling some known results.

\subsection{Preliminaries}\label{subsect:1}

Let $n\ge 3$ be an odd integer to be tested, and
$n=\prod_{p^{v_p}\| n} p^{v_p}$
its prime factorization.
Let $S$ be a $d$-dimensional cyclic extension of
$R:={\mathbb Z}/n{\mathbb Z}$ with Galois group generated by $\sigma$.
As a ring $S$ is isomorphic to
$$
\prod_{p\mid n} S_p,
$$
where $S_p:=S/p^{v_p}S$. Fixing a prime factor
$p$ of $n$, we set ${\bf L}_p:=S_p/pS_p$ and $\mathbf{K}:=R/pR$.
It is known \cite[Section 2]{Couv.Ezo.Ler}
that $S$ is a free $R[\sigma]$-module of rank $1$.
We denote by $(\sigma^i(\omega))_{0\le i\le d-1}$ a
normal basis of $S$ over $R$. The ring
${\bf L}_p$ has only a finite number
of prime ideals, say ${\bf p}_1,\ldots,{\bf p}_m$.
The residue fields  ${\bf L}_p/{\bf p}_i$ for $i=1,\ldots,m$
are all isomorphic to ${\mathbb F}_{p^f}$, the finite field
with $f$ elements, where of course 
$d=fm$. In particular, the ring ${\bf L}_p$ is isomorphic to
$$
\prod_{p\mid n} {\mathbb F}_{p^f}.
$$
So, the more composite $d$ is the more possibilities
for the pair $(m,f)=(m,d/m)$.
The $R$-automorphism
$\sigma : S \longrightarrow S$ induces
a $\mathbf{K}$-automorphism of ${\bf L}_p$.
There is an integer $z$ coprime to $m$ such that
$$x^p=\sigma^{zm}(x), \ \ \text{ for all } x\in {\bf L}_p.$$
We denote by $t\in \llbracket  1,f-1\rrbracket$ the inverse of 
$z$ modulo $f$ (if $f=1$, we have $z=t=0$).
Note that the integers $f,m$ and $t$ depend on the prime $p$.
For a fixed dimension $d$, there are only finitely many possibilities for $(m,f,t)$, and every prime  gets some such possibility assigned to it (these possibilities might not all be the same they may vary with the primes). 

Assume that $\sigma(\omega)=\omega^n$. Then
$n$ passes successfully a Galois test of dimension $d$
if by choosing randomly a nonzero element $x$ in $S$,
we have that $x$ is invertible and
\begin{equation}\label{eq:16}
\sigma(x)=x^n.
\end{equation}
Let $\mathbf{Gal}(n)$ be the set of invertible
elements in $S^\times$ which are solutions of
Equation $(\ref{eq:16})$. We denote by $\mathrm{Gal}(n)$ its cardinality.
It is proved \cite[Section 1 and Section 2]{Couv.Ezo.Ler}
that
\begin{equation}\label{eq:17}
Gal(n)=\prod_{p\mid n}\gcd(n^m-p^t,p^f-1).
\end{equation}

Since $n^m-p^t$
divides $n^{mf}-p^{tf}=(n^d-1)-((p^f)^t-1)$,
we deduce that ${Gal}(n)$ divides
$$D(n):=\prod_{p\mid n} \gcd(p^f-1,n^d-1).$$
Note that the latter quantity
counts the number of elements $x$ in the group of invertible
elements of the ring 
$$
\prod_{p\mid n} {\mathbb F}_{p^f}
$$
satisfying $x^{n^d-1}=1$. Since these elements form a subgroup,
there exists some positive integer $k$ such that
\begin{equation}\label{eq:17}
D(n)=\frac{1}{k} \prod_{p\mid n} (p^f-1).
\end{equation}
Set $L(x)= \exp\left(\frac{\log x\log \log \log x}{\log \log x}\right)
\text{ for all large } x$. The lemma below is very useful.

\begin{lemma}\label{lemma:1}
Given a positive integer $k$, the number of composite numbers $n\le x$
having a prime divisor $p>kL(x)$ and such that
$D(n)=\frac{1}{k} \prod_{p\mid n} (p^f-1)$ is bounded
from above by $ xL(x)^{-1+o(1)}$.
\end{lemma}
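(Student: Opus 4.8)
The plan is to start from the hypothesis $D(n)=\frac1k\prod_{p\mid n}(p^f-1)$ and convert it into a workable divisibility condition. Writing $g_p:=\gcd(p^f-1,n^d-1)$, the defining product for $D(n)$ gives $\prod_{p\mid n}\frac{p^f-1}{g_p}=k$, so each factor $\frac{p^f-1}{g_p}$ is a positive integer dividing $k$. In particular, for the distinguished prime divisor $p>kL(x)$ there is a divisor $e\mid k$ with $g_p=(p^f-1)/e$, and since $g_p\mid n^d-1$ we get that $q:=(p^f-1)/e$ divides $n^d-1$. As $f\mid d$ (recall $d=fm$) and $e\mid k$ range over only $O_d(1)$ and $O_k(1)$ values respectively, I would fix $f$ and $e$ and bound, for each pair, the number of $n\le x$ having a prime $p>kL(x)$ with $p\mid n$ and $(p^f-1)/e\mid n^d-1$; a union bound over $(f,e)$ then finishes.

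The key step is to eliminate the $p$-part of $n$. Write $n=p^{v}a$ with $p\nmid a$. The contribution of $v\ge 2$ is negligible, being at most $\sum_{p>kL(x)}x/p^2\ll x/L(x)$ even without the congruence. For $v=1$, since $q\mid p^f-1$ and $d=fm$ we have $p^d=(p^f)^m\equiv1\pmod q$, hence $n^d=p^da^d\equiv a^d\pmod q$, so that $q\mid n^d-1$ is equivalent to $a^d\equiv1\pmod q$. Because $\gcd(p,q)=1$, the congruences $n\equiv0\pmod p$ and $a^d\equiv1\pmod q$ confine $n=pa$ to at most $R_d(q)$ residue classes modulo $pq$, where $R_d(q)$ denotes the number of $d$-th roots of unity modulo $q$. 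I will use that for fixed $d$ one has $R_d(q)\le d^{\omega(q)}=q^{o(1)}$ uniformly as $q\to\infty$ (with $\omega$ the number of distinct prime factors), which applies here since $q\ge (kL(x))^f/k\to\infty$.

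The summation then splits into a main term and a tail. The main term $\sum_{p>kL(x)}R_d(q)\,\dfrac{x}{pq}$, with $q\asymp p^f/e$ and $R_d(q)=p^{o(1)}$, is at most $x\sum_{p>kL(x)}p^{-f-1+o(1)}\ll x\,(kL(x))^{-f+o(1)}$; the dominant case $f=1$ gives exactly $xL(x)^{-1+o(1)}$. The leftover ``$+1$'' contribution, coming from the range where $x/(pq)<1$ (that is, $p$ large), is treated dually: there $a=n/p\le x^{f/(f+1)}$ and, because $n$ is composite with $p\parallel n$, one has $a\ge2$, so $q$ is a divisor of the nonzero integer $a^d-1$ and $p=\big(eq+1\big)^{1/f}$ is determined by $q$. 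Summing the divisor bound $\tau(a^d-1)=x^{o(1)}$ over $a\le x^{f/(f+1)}$ yields at most $x^{f/(f+1)+o(1)}\le x^{d/(d+1)+o(1)}$, which is $o\big(xL(x)^{-1+o(1)}\big)$ since $d$ is fixed.

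The crux is the quantitative tail estimate $\sum_{p>kL(x)}R_d(q)/p^{f+1}\ll(kL(x))^{-f+o(1)}$: it is precisely the threshold $p>kL(x)$ that converts the convergence of $\sum_p p^{-2}$ into the saving $L(x)^{-1}$. The main technical obstacle I anticipate is making the bound $R_d(q)\le d^{\omega(q)}=q^{o(1)}$ fully uniform over the primes in play (or, alternatively, replacing it by an averaged divisor estimate for the shifted integers $p^f-1$), together with verifying that the large-$p$/composite range really contributes a strictly smaller power of $x$ for every residue degree $f\le d$; both are routine once $d$ is held fixed, but that is where the uniformity has to be argued with care.
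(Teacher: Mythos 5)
Your argument is correct, and its skeleton matches the paper's: convert the hypothesis into a divisibility $q\mid n^d-1$ for a large modulus $q$ built from the distinguished prime $p$, confine $n$ to $q^{o(1)}$ residue classes modulo $pq$, extract the main saving from the convergence of $\sum_{p>kL(x)}p^{-2}$ (exactly how the paper spends the threshold $p>kL(x)$ as well), and dispose of the leftover ``$+1$'' contributions by a count of size $x^{d/(d+1)+o(1)}$. But two of your steps genuinely diverge from the paper, both to your advantage. First, the paper only extracts $p-1\mid k(n^d-1)$ and works with $D=(p-1)/\gcd(p-1,k)$, which forces a lengthy parenthesis splitting into the cases $\gcd(p-1,k)\ge L(x)$ and $\gcd(p-1,k)<L(x)$, with divisor bounds on $\tau(p-1)$ and on the divisors of $k$; your observation that $\prod_{p\mid n}(p^f-1)/g_p=k$ holds \emph{exactly}, so that each factor is a divisor $e$ of $k$ and $q=(p^f-1)/e$ itself divides $n^d-1$, eliminates that case analysis entirely and retains the full modulus $q\asymp p^f/e$ instead of a divisor of $p-1$ (a strict gain for $f\ge 2$, though $f=1$ already dictates the final bound). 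Second, for the ``$+1$'' terms the paper counts initial terms via the smallest solution $n=pm$ with $m>D^{1/d}$, hence $pD^{1/d}\le x$, and then counts the possible moduli; your dual count --- $a\ge 2$ because $n$ is composite with $p\,\|\,n$, then $q\mid a^d-1$ and $p$ recovered from $(q,e,f)$, giving at most $\sum_{a\le x^{f/(f+1)}}\tau(a^d-1)=x^{f/(f+1)+o(1)}$ --- is cleaner and lands on the same exponent $x^{d/(d+1)+o(1)}$. Two harmless quibbles: the bound $R_d(q)\le d^{\omega(q)}$ should carry a bounded correction factor (the paper's $C_d$) coming from $2$-power moduli and from small primes dividing $d$, which does not affect $R_d(q)=q^{o(1)}$; and your separate treatment of $v_p(n)\ge 2$ via $\sum_{p>kL(x)}x/p^2\ll x/L(x)$ is fine, though the paper sidesteps it by counting all $n\equiv 0\pmod p$ in a class modulo $pD$. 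A bonus of your route: it is visibly uniform in $k\le L(x)$, which is what the application in Theorem \ref{thm:4} actually requires, whereas in the paper that uniformity is precisely what the $\gcd(p-1,k)$ case analysis is there to restore.
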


\begin{proof} 
Indeed, equation $(\ref{eq:17})$ implies that
$\prod_{p\mid n}(p^f-1)$ divides $k(n^d-1)$. So,
$$
p-1\mid k(n^d-1).
$$
Set $D:=(p-1)/\gcd(k,p-1)$. Since $p$ is fixed and so is $k$,
the number $D$ is fixed. We need to count the $n\le x$ 
divisible by $p$ such that $n^d-1$ is divisible by $D$.
Write $D=\prod_{q^u\| D} q^u$.
Then the number of solutions $y$ to the congruence
$$
y^d-1\equiv 0\pmod D
$$
is equal modulo $D$ to 
$$
\prod_{q^u\| D} \rho(q^u),
$$
where for a prime power $q^j$, $\rho(q^j)$ denotes the number of
solutions $y$ modulo $q^j$ of the congruence $y^d-1\equiv 0\pmod q^j$.
Therefore, the number of $n\le x$ 
divisible by $p$ such that $n^d-1$ is divisible by $D$ is 
bounded from above by $C_d d^{\omega(D)}$, where $C_d$ depends only on $d$.
This upper bound is obvious for $q>d$, since
$y^d-1\equiv 0\pmod q$ has then at most $d$ solutions
modulo $q$ which by Hensel's lemma are each extendible in a
unique way to a  solution $x$ modulo $q^j$ for every $j\ge 1$.
The constant term $C_d$ obviously equals $1$ in this case.
A nontrivial $C_d$ appears when dealing with moduli which are small
prime powers of primes dividing $d$. Since
\begin{eqnarray*}
C_d d^{\omega(D)} & \le & C_d d^{O(\log(p-1)/\log\log(p-1))}\\
& = & \exp\left(\log C_d+O\left(\frac{\log x}{\log\log x}\right)\right)\\
& = & L(x)^{o(1)},
\end{eqnarray*}
the number of such progressions is $L(x)^{o(1)}$.
Let us count the $n\le x$ in a fixed progression.
We take $x_0\in [1,D-1]$ such that $x_0^d\equiv 1\pmod D$
and we count the number of $n\le x$ such that $n\equiv x_0\pmod D$. Since $n\equiv 0\pmod p$ and 
$(p,D)=1$ (since $D\mid p-1)$, we get that this puts $n$ into some progression $n\equiv x_0'\pmod {pD}$. The number of such $n\le x$ is 
\begin{equation}\label{eq:19}
1+\left\lfloor \frac{x}{pD}\right\rfloor.
\end{equation}
Since
$$
\frac{x}{pD}=\frac{x}{p(p-1)/\gcd(k,p-1)}\le \frac{kx}{p(p-1)},
$$
we have
$$
\sum_{p>kL(x)} \left\lfloor \frac{x}{pD}\right\rfloor \le 2kx\sum_{p>kL(x)}\frac{1}{p^2}\ll \frac{x}{L(x)}. 
$$
It remains to count the $1$'s in equation $(\ref{eq:19})$.
These are the initial
terms $x_0'$ of the solutions modulo
$Dp$, namely the smallest positive integer $n\le x$ such
that $n\equiv x_0\pmod D$ and $n\equiv 0\pmod p$. Write 
$n=pm$. Then $n^d\equiv p^d m^d\equiv 1\pmod D$.
Since $D\mid p-1$, we get that $m^d\equiv 1\pmod D$.
This means $D\mid m^d-1$. We know that $m>1$ since $n$
is not prime. Thus, $m>D^{1/d}$. Since $pm\le x$,
it follows that $pD^{1/d}\le x$.

Let us make a parenthesis and take a closer look at $\gcd(p-1,k)$. Let $k_1=\gcd(p-1,k)$. We first treat the case $k_1\ge L(x)$.  Note that $k\mid \prod_{p\mid n} (p^f-1)<n^f\le x^d$. Thus, $k_1$ is a divisor of 
a number of size $x^d$. The number of divisors of $k$ is therefore at most 
$$
\max_{k<x^d} \tau(k)=\exp((O(\log x^d/\log \log x^d))=\exp(O(d\log x/\log\log x))=L(x)^{o(1)}
$$
as $x\to\infty$. Assume now that $k_1\mid k$ is fixed and let us count primes $p\le x$ such that $\gcd(p-1,k)=k_1$. Then $p\equiv 1\pmod {k_1}$. 
We bound the number of such primes $p\le x$ trivially $\le x/k_1\le x/L(x)$. Given $p$, $D$ is a divisor of $p-1$, so $D$ can have at most $\tau(p-1)=L(x)^{o(1)}$ possibilities. Thus, for fixed $k_1$, there are 
$x/L(x)^{1+o(1)}$ possibilities for the pair $(p,D)$. Summing up over the $L(x)^{o(1)}$ possible values of $k_1$ (divisors of $k$ larger than $L(x)$), we get a count of $x/L(x)^{1+o(1)}$ for the number of such pairs $(p,D)$.

Next, we assume that $k_1<L(x)$.  Then
$$
D=\frac{p-1}{\gcd(p-1,k)}=\frac{p-1}{k_1}>\frac{p-1}{L(x)},
$$ 
so 
$$D^{1/d}\ge (p-1)^{1/d}/L(x)^{1/d}.
$$ 
Thus, $p(p-1)^{1/d}\le pD^{1/d}\le xL(x)^{1/d}=x^{1+o(1)}$, showing that
$p\le x^{d/(d+1)+o(1)}$. Given $p$, $D$ is determined
in at most $\tau(p-1)=L(x)^{o(1)}$ ways, so the modulus 
$Dp$ is determined in at most $x^{d/(d+1)+o(1)}$ ways. Thus, 
the number of starting points $x_0\pmod D$ is also determined in at most $L(x)^{o(1)}$ ways. So, the number of $1$'s in this last case is in fact much smaller namely at most
$$
x^{d/(d+1)+o(1)}<\frac{x}{L(x)}
$$
for large $x$. This concludes the proof of the lemma.
\end{proof}

\subsection{Average order of $\mathrm{Gal}(n)$}\label{subsect:2}

Here, we study the arithmetic mean
of the number of bad witnesses of the Galois test.
We first compute a function which bounds it from below. In what follows
$\SSIG$ denotes a sum over composite numbers.

\begin{theorem}\label{thm:5}
For all large $x$,
$$
\frac{1}{x}\SSIG_{n\le x} Gal(n)> x^{\frac{15}{23}}.
$$
\end{theorem}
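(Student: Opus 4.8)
The plan is to reduce the assertion to the average number of Fermat false witnesses of composite numbers --- the quantity estimated by Erd\"os and Pomerance --- and then to invoke the lower bound half of their estimate.

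First I would record the elementary comparison
\[
Gal(n)\ \ge\ F(n):=\prod_{p\mid n}\gcd(n-1,p-1),
\]
valid for every composite $n$ and every admissible dimension $d$, where $F(n)$ is the number of invertible $a\bmod n$ with $a^{n-1}\equiv 1\pmod n$. To see it, fix a prime $p\mid n$ with associated triple $(m,f,t)$ and set $g:=\gcd(n-1,p-1)$. Since $g\mid p-1$ we have both $g\mid p^f-1$ and $p\equiv 1\pmod g$, so $p^t\equiv 1\pmod g$; and $g\mid n-1$ gives $n^m\equiv 1\pmod g$. Hence $g\mid n^m-p^t$, so $g$ divides $\gcd(n^m-p^t,p^f-1)$. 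Multiplying over $p\mid n$ and comparing with the product formula for $Gal(n)$ yields the displayed inequality. Because this holds whatever triples $(m,f,t)$ the chosen test attaches to the primes of $n$, any lower bound proved for the average of $F$ transfers verbatim to $Gal$.

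It therefore suffices to show $\frac1x\SSIG_{n\le x}F(n)>x^{15/23}$ for all large $x$. Here I would use the construction behind the lower bound of \cite{Erdos-Pomerance}: one assembles composite squarefree $n\le x$ as products of primes drawn from arithmetic progressions to a common highly composite modulus $D$, arranged so that $D\mid n-1$ and $D\mid p-1$ for each prime factor $p$, so that every local factor $\gcd(n-1,p-1)$ absorbs a large divisor of $p-1$ and $F(n)$ is accordingly large. Counting such $n$ through the density of primes in progressions (Brun--Titchmarsh, with a Linnik-type input controlling the admissible size of $D$) and then optimizing the number of prime factors against the sizes of the primes $p$ and of $D$ is precisely the linear optimization solved in \cite{Erdos-Pomerance}, and it is this optimization that produces the exponent $15/23$. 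I would restrict throughout to squarefree $n$, where the comparison of the previous paragraph and the formula $F(n)=\prod_{p\mid n}\gcd(n-1,p-1)$ hold cleanly; since the squarefree $n$ already account for the full Erd\"os--Pomerance main term, this restriction costs nothing.

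The hard part will be extracting the \emph{strict} inequality $>x^{15/23}$ rather than the soft estimate $x^{15/23+o(1)}$ that \cite{Erdos-Pomerance} states. Concretely, the naive versions of the above construction (for instance using only a bounded number of prime factors) reach merely the exponent $1/2$, so one must run the full Erd\"os--Pomerance construction and, moreover, keep track of its secondary factor: the point is to verify that the resulting lower bound is $x^{15/23}$ times a quantity that is at least $1$ --- in fact tending to infinity --- for all large $x$. Reopening their argument to confirm the sign and size of this secondary factor is the only genuinely delicate step; once it is in place, the chain $\frac1x\SSIG_{n\le x}Gal(n)\ge\frac1x\SSIG_{n\le x}F(n)>x^{15/23}$ closes the proof.
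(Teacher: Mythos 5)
Your proposal is correct and takes essentially the same route as the paper: the paper's proof likewise rests on the local divisibility $\gcd(n-1,p-1)\mid\gcd(n^m-p^t,p^f-1)$ (used there in the form $Gal(n)\ge\prod_{p\mid s}\gcd(n-1,p-1)=\varphi(s)$) and then simply replays the Erd\"os--Pomerance lower-bound construction from \cite{Erdos-Pomerance} (primes with $p-1\mid M(x)$, Balog's bound $\beta>23/8$, and Linnik's theorem to append the prime $q$ with $sq\equiv 1\pmod{M}$), which is exactly the machinery you propose to cite as a black box. Your closing worry about strictness is unnecessary --- since $\beta>23/8$ \emph{strictly}, the bound $x^{1-\beta^{-1}+o(1)}$ exceeds $x^{15/23}$ outright for all large $x$ (and Theorem 2.1 of \cite{Erdos-Pomerance} already asserts the strict inequality), so no reopening of their optimization is needed; note also that your sketch of their construction slightly misstates it, as they take primes satisfying the divisibility $p-1\mid M(x)$ rather than primes in a progression to a fixed modulus $D$ dividing every $p-1$.
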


\begin{proof}
Let $M(x)$ denote the least common multiple
of the integers up to $x$.
For any $y$, let $$\mathbf{P}(y,x)=\{ p\le y : p-1|M(x) \}$$
It is shown (\cite{Erdos} or \cite{Pomerance})  that
there is a real number $\alpha>1$
 such that
$$\# \mathbf{P}(x^{\alpha'},x)=O(x^{\alpha'}/\log x) \quad \text{for all }
0< \alpha'< \alpha.$$
We let 
$$\beta=\sup \{ \alpha : \#  \mathbf{P}(x^{\alpha'},x) = x^{\alpha'+o(1)}
\text{ for all }  0< \alpha'<\alpha \}.$$
From \cite{Balog}, we have
$$\beta >23/8.$$
Let $L$ denote an upper bound for Linnik's constant, so that given 
positive integers $a,m$ with $\gcd(a,m)=1$ and $m>1$, then there is a prime
$p\equiv a\pmod m$ with $p<m^L$.
Let $\alpha$ be such that $1<\alpha<\beta$ and
$0<\epsilon<\alpha-1$ arbitrarily small.
We set 
$$\begin{array}{c}
M=M(\log x/\log \log x),\\
\mathbf{P}=\mathbf{P}(\log^\alpha x,\log x/\log \log x)-
\{  p : p\le \log^{\alpha-\epsilon}x \}.
\end{array}
$$
Let $\mathbf{S}$ denote the set of integers composed of exactly
$$k=[\log(x/M^L)/\log(\log^\alpha x)]$$
distinct primes in $\mathbf{P}$.
Thus, if $s\in \mathbf{S}$, then
\begin{equation}\label{eq:14}
x^{1-\epsilon} <s<x/M^L.
\end{equation}
Let $\mathbf{S}'$ be the set of products $sq$,
where $s\in \mathbf{S}$ and $q$ is the least prime such that 
$$sq\equiv 1 \pmod M.$$
It is shown (\cite{Erdos-Pomerance}, Proof of Theorem 2.1)
that 
\begin{equation}\label{eq:15}
 sq\le x, \quad \text{ and } \quad \#\mathbf{S}' \ge x^{(\alpha-1)\alpha^{-1}+o(1)}.
\end{equation}
So if $n=sq \in \mathbf{S}'$, then 
$$\mathrm{Gal}(n)=\prod_{p|n} \gcd(n^m-p^t,p^f-1)\ge \prod_{p|s}\gcd(n^m-p^t,p^f-1)
\ge \prod_{p|s}\gcd(n-1,p-1)$$
$$=\prod_{p|s}\gcd(M,p-1)=\varphi(s).$$
Theorem $328$ in \cite{Hardy-Wright} implies that
$$s/\log\log s=O(\varphi(s)).$$
Besides, $(\ref{eq:14})$ and $(\ref{eq:15})$ 
give 
$$s/\log\log s\ge x^{1-\epsilon}/\log\log x.$$ So
$$\frac{1}{x} \SSIG_{n\le x } Gal(n)
 \ge \frac{1}{x} \sum_{n \in \mathbf{S}'}
Gal(n)\ge x^{-\epsilon + o(1)} \cdot \#\mathbf{S}'
\ge x^{1-\varepsilon-\alpha^{-1}+o(1)}.$$
 Since $\varepsilon>0$ is arbitrarily small and $\alpha$ is arbitrarily
 close to $\beta$, we have
$$\frac{1}{x}\SSIG_{n\le x} Gal(n)\ge x^{1-\beta^{-1}+o(1)}.$$
Hence,
$$
\frac{1}{x}\SSIG_{n\le x} Gal(n)> x^{\frac{15}{23}+o(1)},
$$
since $\beta>23/8$.

\end{proof}

We now compute a function which bounds from above
the number of bad witnesses of the Galois test.

\begin{theorem}\label{thm:4}
As $x \to\infty$,
\begin{equation}\label{eq:18}
\frac{1}{x}\SSIG_{n\le x} Gal(n)\le x^{d}L(x)^{-1+o(1)}.
\end{equation}
\end{theorem}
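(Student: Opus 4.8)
The plan is to bound $Gal(n)$ by $D(n)$ and to exploit the factorization $D(n)=\frac{1}{k}\prod_{p\mid n}(p^{f}-1)$ with $k=k(n)\ge 1$. Because each local degree satisfies $f\le d$, for every composite $n\le x$ one has
\begin{equation*}
Gal(n)\le D(n)=\frac{1}{k(n)}\prod_{p\mid n}(p^{f}-1)\le\frac{1}{k(n)}(\mathrm{rad}\,n)^{d}\le\frac{x^{d}}{k(n)}.
\end{equation*}
Writing $P(n)$ for the largest prime factor of $n$, I would partition the composite $n\le x$ into three families, show each contributes at most $x^{d+1}L(x)^{-1+o(1)}$ to $\SSIG_{n\le x}Gal(n)$, and divide by $x$ to obtain $(\ref{eq:18})$. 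The families are: (I) $k(n)>L(x)$; (II) $k(n)\le L(x)$ and $P(n)>k(n)L(x)$; (III) $k(n)\le L(x)$ and $P(n)\le k(n)L(x)$.

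For family (I) the displayed bound gives $Gal(n)<x^{d}L(x)^{-1}$, and since there are at most $x$ composite integers up to $x$ the total contribution is at most $x^{d+1}L(x)^{-1}$. For family (II) I would invoke Lemma \ref{lemma:1}, which for each fixed value $k\le L(x)$ bounds the number of such $n$ by $xL(x)^{-1+o(1)}$; combined with $Gal(n)\le x^{d}/k$ and the harmonic sum $\sum_{k\le L(x)}1/k=L(x)^{o(1)}$, this keeps the contribution at $x^{d+1}L(x)^{-1+o(1)}$. The weighting by $1/k$ is what makes the sum over $k$ cost only a factor $L(x)^{o(1)}$.

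Family (III) is the delicate one. Here every $n$ is $L(x)^{2}$-smooth, and the count of such integers is $\Psi(x,L(x)^{2})=x(\log x)^{-1/2+o(1)}=xL(x)^{o(1)}$, so a pure counting bound against $Gal(n)\le x^{d}$ loses an unacceptable factor. I would therefore threshold on the size of $Gal(n)$ at $T:=x^{d}L(x)^{-1}$. The smooth $n$ with $Gal(n)\le T$ contribute at most
\begin{equation*}
T\cdot\Psi(x,L(x)^{2})=x^{d+1}L(x)^{-1}(\log x)^{-1/2+o(1)}=x^{d+1}L(x)^{-1+o(1)},
\end{equation*}
since $(\log x)^{-1/2}=L(x)^{o(1)}$; this part is therefore harmless.

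It remains to bound the number of $L(x)^{2}$-smooth $n\le x$ with $Gal(n)>T$, and proving this count is $\le xL(x)^{-1+o(1)}$ is where the real work lies. For such $n$ the inequality $T<\frac{1}{k}(\mathrm{rad}\,n)^{d}$ forces $\mathrm{rad}\,n>xL(x)^{-1/d}$, so $n$ is close to squarefree and has $\omega(n)\ge\tfrac{1}{2}\log\log x/\log\log\log x$ distinct prime factors. Exactly as in the proof of Lemma \ref{lemma:1}, the largeness of $D(n)$ produces, for every $p\mid n$, the divisibility $\frac{p-1}{\gcd(p-1,k)}\mid n^{d}-1$, so $n$ is confined to a single residue class modulo $\mathrm{lcm}_{p\mid n}\frac{p-1}{\gcd(p-1,k)}$. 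The crux, and the genuine obstacle, is to show that for all but $xL(x)^{-1+o(1)}$ of these many-prime-factored $n$ this least common multiple exceeds $x$ — a Korselt/Carmichael-type estimate in the spirit of \cite{Erdos-Pomerance} — after which at most one integer $n\le x$ survives per class and the bound closes. I expect controlling how rarely $\mathrm{lcm}_{p\mid n}\frac{p-1}{\gcd(p-1,k)}$ can stay small, as $p$ ranges over the many small prime factors of a smooth $n$, to be the main difficulty.
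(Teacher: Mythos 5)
Your overall architecture is sound and largely parallels the paper's: bounding $Gal(n)$ by $D(n)$, stratifying by the index $k$, handling $k>L(x)$ trivially (the paper's term $\sum_{n\le x}n^d/L(x)$), and invoking Lemma \ref{lemma:1} when $n$ has a prime factor exceeding $kL(x)$, with the harmonic sum over $k\le L(x)$ costing only $L(x)^{o(1)}$. Your thresholding at $T=x^dL(x)^{-1}$ inside the smooth family is a legitimate variant, the computation $\Psi(x,L(x)^2)=x(\log x)^{-1/2+o(1)}=xL(x)^{o(1)}$ is correct, and the deduction that $Gal(n)>T$ forces $\mathrm{rad}(n)>xL(x)^{-1/d}$ is right (incidentally it also forces the squarefull part of $n$ below $L(x)^{2/d}$, recovering for free the paper's reduction to squarefull part $b\le L(x)^2$, and it forces $n\ge x/L(x)$, so you land exactly at the paper's case (iii)). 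But the proposal stops exactly where the real proof begins, and the mechanism you sketch for closing it would not work even if your deferred estimate were granted.

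Two concrete problems. First, you explicitly leave unproven the claim that for all but $xL(x)^{-1+o(1)}$ of the relevant $n$ the quantity $\mathrm{lcm}_{p\mid n}\,(p-1)/\gcd(p-1,k)$ exceeds $x$; that is not a technical loose end but the entire content of the hard case. Second, and more fundamentally, the proposed endgame is circular: the modulus $\mathrm{lcm}_{p\mid n}(p-1)/\gcd(p-1,k)$ and the residue class are determined by $n$ itself, so \emph{every} such $n$ trivially ``survives'' in its own class, and the number of admissible (modulus, class) pairs is precisely the quantity you are trying to bound --- ``at most one $n\le x$ per class'' yields nothing unless the pairs can be enumerated independently of $n$, and they cannot. (A smaller slip: $n^d\equiv 1\pmod D$ confines $n$ to up to $C_dd^{\omega(D)}$ classes, not one; this is harmless since that factor is $L(x)^{o(1)}$.) The paper escapes the circularity by anchoring on a divisor $n_1$ of $n$ in the \emph{prescribed} window $x/(kL(x)^2)<n_1<x/L(x)$, which exists because $n\ge x/L(x)$ and all prime factors of $n$ are at most $kL(x)$; writing $n_1=ab$ with $a$ squarefree and $b\le L(x)^2$ squarefull, it uses only $a\mid n$ together with $\lambda(a)\mid k(n^d-1)$ to place $n$ in at most $C_dd^{\omega(\lambda(a)/(k,\lambda(a)))}=L(x)^{o(1)}$ progressions modulo $a\lambda(a)/(k,\lambda(a))$, each containing at most $x(k,\lambda(a))/(ab\lambda(a))+1$ integers up to $x$. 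The $+1$ terms total $O(x/L(x))$ precisely because the anchors $n_1$ all lie below $x/L(x)$, while the main terms are controlled by the Erd\H{o}s--Pomerance average estimate for $\sum_a (k,\lambda(a))/(ab\lambda(a))$, which is exactly the quantitative ``$\lambda(a)$ is rarely small'' input that your sketch names as the main difficulty and does not supply. The structural lesson is that one must work with a mid-sized modulus built from a divisor in a fixed window --- small enough to have few anchors, large enough on average by the $\lambda$-estimate --- rather than a modulus exceeding $x$ assembled from all of $n$'s prime factors.
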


\begin{proof}
We saw in Subsection \ref{subsect:1} that
$Gal(n)$ divides $D(n)$. Therefore, it suffices to show that
$$
\frac{1}{x}\SSIG_{n\le x} D(n)\le x^{d}L(x)^{-1+o(1)}.
$$
For every integer $k\ge 1$, we let $\mathbf{C}_k(x)$ be the set of composite
numbers $n\le x$ such that $D(n)=\frac{1}{k} \prod_{p\mid n} (p^f-1)$, and
we set $C_k(x)= \#\mathbf{C}_k(x)$. Then
$$\SSIG_{n\le x} D(n) = \sum_k \sum_{n\in  \mathbf{C}_k(x)}
\frac{1}{k} \prod_{p\mid n} (p^f-1)
\le \sum_{n\le x} \frac{n^{d}}{L(x)} +
\sum_{k\le L(x)} \frac{1}{k}\sum_{n\in  \mathbf{C}_k(x)} n^d.$$
So,
$$\SSIG_{n\le x} D(n)\le \frac{x^{d+1}}{L(x)}+
x^{d}\sum_{k\le L(x)} \frac{C_k(x)}{k}.$$

Therefore, we have to show that
$$
C_k(x)\le xL(x)^{-1+o(1)} \text{ for } k\le L(x).$$
There are three cases to be considered:
\begin{itemize}
\item[(i)] $n<x/L(x)$,
\item[(ii)] $n$ is divisible by some prime 
$p>kL(x)$,
\item[(iii)] $n\ge x/L(x)$ and every prime divisor of $n$ is
at most $kL(x)$.
\end{itemize}
Situation $(i)$ is trivial.
Situation $(ii)$ is taken care of by Lemma \ref{lemma:1}. 
Situation $(iii)$ implies that $n$ has a divisor $n_1$ satisfying
\begin{equation}\label{eq:20}
\frac{x}{kL(x)^2} < n_1 <\frac{x}{L(x)}.
\end{equation}
By $(\ref{eq:17})$, we know that
the exponent $\lambda_S(n)$ of the group
$\prod_{p\mid n} {\mathbb F}_{p^f}^*$ divides $k(n^d-1)$. 
We would like to follow the proof of case (iii) of Theorem 2.2 in \cite{Erdos-Pomerance}, but there are additional complications due to the possible prime factors of $n$ which might appear 
to powers larger than $1$ in the factorization of $n$. So, we write $n_1=ab$, where $a$ is squarefree and $b$ is squarefull. We may assume that $b\le L(x)^2$, since 
if not, $b>L(x)^2$ and the number of $n\le x$ such that $b\mid n$ is $\le x/b$. Since 
$$
\sum_{\substack{m>y\\ m~{\text{\rm squarefull}}}} \frac{1}{m}=O\left(\frac{1}{\sqrt{t}}\right),
$$
we get that the set of such $n$ is of cardinality 
$$
O\left(\frac{x}{L(x)}\right)=\frac{x}{L(x)^{1+o(1)}}
$$
irregardless of the value of $k$. So, assuming $b\le L(x)^2$, we get that 
\begin{equation}
\label{eq:a}
\frac{x}{k L(x)^4}\le \frac{x}{k L(x)^2 b}\le a\le \frac{x}{L(x) b}.
\end{equation}
We fix a squarefree $a$, a squarefull $b$ coprime to $a$ and smaller than $L(x)^2$ and we look for $n=b\lambda$. Clearly, $\lambda\le x/b$. Further, $a\mid \lambda$ and additionally $\lambda(a)\mid k(n^d-1)$, 
which shows that $\lambda^d\equiv (b^*)^d\pmod {\lambda(a)/(\lambda(a),k)}$. Here, $b^*$ is the inverse of $b$ modulo $\lambda(a)/(k,\lambda(a))$. Since $b$ is fixed, this puts  $\lambda\le x/b$ in at most 
$C_d d^{\omega(\lambda(a)/(k,\lambda(a)))}$ arithmetic progressions modulo $\lambda(a)/(k,\lambda(a))$. By the Chinese Remainder Theorem, this puts $\lambda\le x/b$ in 
$$
C_d d^{\omega(\lambda(a)/(k,\lambda(a)))}
$$
progressions modulo $a\lambda(a)/(k,\lambda(a))$. The number of such numbers $\lambda\le x/b$ is 
$$
\le C_d d^{\omega(\lambda(a)/(k,\lambda(a)))} \left(\frac{x}{b a\lambda(a)/(k,\lambda(a))}+1\right).
$$
Since $\lambda(a)\le x$ has $O(\log x/\log\log x)$ prime factors, the factor multiplying the parenthesis is of size $\exp(O(\log x/\log_2 x))=L(x)^{o(1)}$ as $x\to\infty$. Factoring out such factor and summing up the remaining parenthesis over all 
numbers of the form $n_1=ab$ we get a count of 
$$
\sum_{b\le L(x)^2} \sum_{a}^* \frac{x(k,\lambda(a))}{ab \lambda(a)}+\sum_{ab\le x/L(x)} 1.
$$
Here, for a fixed $b$, the star on the inner summation indicates that the summation is over the squarefree $a$'s satisfying \eqref{eq:a}. 
The second sum is the number of positive integers up to $x/L(x)$ so it is $O(x/L(x))$. The first inner sum is for fixed $b$ estimated as on the bottom of page 283 and top of page 284 in \cite{Erdos-Pomerance}. For a fixed $b$ it ends up being 
$$
\frac{x (\log x)^2}{bL(x/(kL(x)^2b)} 2^{(1+o(1))\log x/\log_2 x}.
$$
Since $b\le L(x)^2$ and $k\le L(x)$, we get that $L(x/(kL(x)^2b))=L(x)^{1+o(1)}$, while $2^{(1+o(1))\log x/\log_2 x}=L(x)^{o(1)}$ as $x\to\infty$. Hence, the first sum simply becomes 
$$
\frac{x}{L(x)^{1+o(1)}} \sum_{b\le L(x)^2} \frac{1}{b}\ll \frac{x}{L(x)^{1+o(1)}},
$$   
which is what we wanted.
\end{proof}

\subsection{Geometric mean value of $\mathrm{Gal}(n)$}\label{subsect:3}

Theorem 3.1 in \cite{Erdos-Pomerance} studied the geometric mean value of 
\begin{equation}\label{eq:21}
F(n)=\prod_{p\mid n} \gcd(p-1,n-1).
\end{equation}
The result there is  that 
$$
\left(\prod_{n\le x} F(n)\right)^{1/x}=(c_2+o(1)) (\log x)^{c_1}
$$
with \begin{equation}\label{eq:5}
c_1=\sum_d \frac{\Lambda(d)}{d\varphi(d)}, \qquad c_2=\exp\left(
1+\sum_d \frac{\Lambda(d)C(d)}{d} \right),
\end{equation}
where $\Lambda(n)$ denotes the von Mangoldt's function
and $\varphi(n)$ denotes the Euler's $\varphi$-function.
Since 
$$
{\text{\rm Gal}}(n)=\prod_{p\mid n} \gcd(n^m-p^t,p-1)\ge F(n),
$$
it follows that the geometric mean value of ${\text{\rm Gal}}(n)$ is at least as large as the geometric mean value of $F(n)$. Since $m,t$ depend on the prime $p$, it seems hard to 
find an asymptotic for the geometric mean of ${\text{\rm Gal}}(n)$. However, since ${\text{\rm Gal}}(n)$ divides 
$$H(n)=\prod_{p[\mid n} \gcd(p^d-1,n^d-1),
$$ 
it follows that the geometric mean of ${\text{\rm Gal}}(n)$ is at most as large as the geometric mean of $H(n)$. Concerning this last one we have the following result.

\begin{theorem}\label{thm:1}
The estimate
$$
\frac{1}{x}\sum_{n\le x} \log H(n)=c_3 \log\log x+O(d^4),
$$
holds with
$$
c_3=c_3(d):=\sum_{d_1\mid d}  \sum_{(\lambda(s),d)=d_1} f(s,d_1)^2 \frac{\Lambda(s)}{s\phi(s)}.
$$
In the above, $\lambda(s)$ is the Carmichael function of $s$, and a for a prime power $s$ and $d_1\mid \lambda(s)$, $f(s,d_1)$ denotes the number of elements in $({\mathbb Z}/s{\mathbb Z})^*$ of order exactly $d_1$.   Note that when $s$ is a prime power we have $f(s,d_1)=d_1$ except if $s=2^a$ with $a\ge 3$ and $d_1\ge 2$ in which case $f(s,d_1)=2d_1$. 
\end{theorem}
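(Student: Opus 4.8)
The plan is to pass to logarithms and expand each gcd with the von Mangoldt identity $\log k=\sum_{s\mid k}\Lambda(s)$. Writing $\log H(n)=\sum_{p\mid n}\log\gcd(p^d-1,n^d-1)$ and $\log\gcd(a,b)=\sum_{s\mid a,\ s\mid b}\Lambda(s)$, I would obtain
$$
\log H(n)=\sum_{p\mid n}\ \sum_{\substack{s\mid p^d-1\\ s\mid n^d-1}}\Lambda(s),
$$
and then interchange the order of summation to reduce the problem to
$$
\sum_{n\le x}\log H(n)=\sum_{s}\Lambda(s)\,A_s(x),\qquad
A_s(x):=\#\{(p,n):p\mid n\le x,\ p^d\equiv n^d\equiv 1\pmod s\}.
$$
Only prime powers $s$ survive because $\Lambda$ vanishes elsewhere, so everything comes down to estimating $A_s(x)$ for each prime power $s$ and then assembling the weighted sum.

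Next I would carry out the local analysis at a fixed prime power $s$. Set $d_1:=\gcd(d,\lambda(s))$ and let $U_d(s):=\{x\in(\mathbb Z/s\mathbb Z)^{\times}:x^d=1\}$ be the group of $d$-th roots of unity modulo $s$; its cardinality is exactly $f(s,d_1)$, which I would compute to be $d_1$ in the cyclic cases (odd prime powers together with $s\in\{2,4\}$) and $2d_1$ in the exceptional case $s=2^a$ with $a\ge 3$ and $d_1\ge 2$, matching the values in the statement. Because $p\mid n$ with $\gcd(p,s)=1$, the residue $n\bmod s$ runs over all of $(\mathbb Z/s\mathbb Z)^{\times}$ as the cofactor $n/p$ varies, so the two constraints $p\bmod s\in U_d(s)$ and $n\bmod s\in U_d(s)$ are independent. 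Hence $p$ is confined to $f(s,d_1)$ residue classes modulo $s$, and, for each admissible $p$, the integers $n\le x$ with $p\mid n$ and $n\bmod s\in U_d(s)$ fall into $f(s,d_1)$ classes modulo $ps$ by the Chinese Remainder Theorem. Combining the count of such $n$ with the Mertens estimate $\sum_{p\le x,\,p\equiv a\ (s)}1/p=\varphi(s)^{-1}\log\log x+O(1)$, summed over the $f(s,d_1)$ admissible classes $a$, produces the main term
$$
A_s(x)=\frac{f(s,d_1)^2}{s\,\varphi(s)}\,x\log\log x+(\text{lower order}).
$$

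Summing over all prime powers $s$ and sorting them by the value $d_1=\gcd(d,\lambda(s))$, which necessarily divides $d$, the leading term becomes
$$
\log\log x\sum_{d_1\mid d}\ \sum_{\gcd(\lambda(s),d)=d_1}f(s,d_1)^2\,\frac{\Lambda(s)}{s\,\varphi(s)}=c_3\log\log x,
$$
so the advertised constant $c_3$ drops out correctly. Convergence of this defining series is immediate since $\varphi(s)\asymp s$ on prime powers and $f(s,d_1)^2\le 4d^2$, whence the general term is $O(d^2\log s/s^2)$.

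The main obstacle is the error term $O(d^4)$, which must be uniform in $x$ with an absolute implied constant. A naive bound of $O(1)$ per residue class cannot be summed directly, since $\sum_s\Lambda(s)f(s,d_1)^2/\varphi(s)$ diverges; as in the proofs of Lemma \ref{lemma:1} and Theorem \ref{thm:4}, the count $\#\{n\le x:n\equiv a\pmod{ps}\}=x/(ps)+O(1)$ must be treated according to whether $ps\le x$ or $ps>x$. When $ps\le x$ the genuine main terms assemble into $c_3\log\log x$, while the $O(1)$ errors, still carrying the full factor $1/(ps)$, sum against the convergent series $\sum_s\Lambda(s)f(s,d_1)^2/(s\varphi(s))$; when $ps>x$ each progression contributes at most one integer, and these initial terms are bounded exactly by the initial-term counting already developed for Lemma \ref{lemma:1} (using $s\mid p^d-1\le x^d$ to truncate). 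Propagating the two factors $f(s,d_1)=O(d)$ through these estimates and summing the contribution over the divisors $d_1\mid d$ is what feeds in the $d$-dependence, and keeping the implied constant absolute throughout is the delicate point that ultimately yields $O(d^4)$. The genuinely new features compared with the treatment of $F(n)=\prod_{p\mid n}\gcd(p-1,n-1)$ in \cite{Erdos-Pomerance} are that $s$ now ranges over prime powers rather than primes, so that $\gcd(d,\lambda(s))$ replaces $\gcd(d,\varphi(s))$ as the local invariant and the exceptional count $f(2^a,\cdot)=2d_1$ enters, and that one must verify that the higher prime powers $s=q^j$ with $j\ge 2$ contribute only to the error and not to the leading constant.
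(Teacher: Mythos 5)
Your outline follows the paper's own route step for step: the von Mangoldt expansion $\log\gcd(p^d-1,n^d-1)=\sum_{s\mid p^d-1,\,s\mid n^d-1}\Lambda(s)$, interchange of summation, the local invariant $d_1=\gcd(d,\lambda(s))$ with $f(s,d_1)$ solutions of $y^d\equiv 1\pmod s$ (including the $s=2^a$ anomaly), placement of $n$ into $f(s,d_1)$ progressions modulo $ps$ by the Chinese Remainder Theorem, and a Mertens-type estimate over the admissible classes. So the architecture is the right one; the problem is that the error analysis, which is where the whole content of the theorem sits, has two genuine gaps. First, the Mertens input you state, $\sum_{p\le x,\ p\equiv a\pmod s}1/p=\varphi(s)^{-1}\log\log x+O(1)$, is too weak to be summed over $s$: the error per class gets multiplied by $x\Lambda(s)f(s,d_1)^2/s$, and since $\sum_{s\le y}\Lambda(s)/s\asymp \log y$, even after truncating at $s\le(\log x)^2$ this contributes a term of size $\asymp d^2\,x\log\log x$ --- the same order in $x$ as the main term $c_3\,x\log\log x$ --- and over the full range $s\le x$ it is $\asymp d^2 x\log x$. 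Your remark that these errors ``carry the full factor $1/(ps)$'' is incorrect: $1/(ps)$ multiplies the main term of the progression count, while the Mertens error stands alone per class. The paper avoids this by invoking the uniform Norton--Pomerance estimate \eqref{eq:NP}, whose error is $O(\log s/\phi(s))$, and by splitting the main sum at $s\le(\log x)^2$ versus $(\log x)^2<s\le x$ (the sums $S_{3,1}$ and $S_{3,2}$), so that the error series $\sum_s \Lambda(s)f(s,d_1)^2\log s/(s\phi(s))$ converges and everything beyond the main term is $O(d^2x)$.

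Second, you never actually derive the $O(d^4)$, and your deferral to the initial-term counting of Lemma \ref{lemma:1} does not deliver it: that argument concerns moduli $pD$ with $D\mid p-1$ for a fixed $k$, not the weighted sum $\sum_s\Lambda(s)\cdot\#\{(p,\ell)\}$ over prime powers $s>x$. In the paper the dominant error comes precisely from $s=q^{\lambda}$ with $q\ge x$ (the sum $S_1$): each such $s$ admits at most $d^2$ pairs $(p,\ell)$, and the key observation is that the product of the maximal relevant prime powers $s_q$ divides $\prod_{p\le x}(p^d-1)\le\exp((1+o(1))dx)$, whence $\sum_q\log s_q=O(dx)$ and $S_1=O(d^4x)$; the intermediate range $q<x\le s$ is treated separately ($S_2=O(d^3x)$) using the fact that the exponent $\lambda$ has $O(d\log x/\log q)$ possibilities. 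A naive count of these large prime powers loses at least a factor of $d$, if not a factor of $\log x$. Both defects are repairable within your framework, but as written your argument does not reach the error term $O(d^4)$, and with the $O(1)$ Mertens error it does not even isolate the main term.
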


One may ask how big is $c_3(d)$? Well since $s\asymp \phi(s)\asymp \lambda(s)$ and $f(s,d_1)\asymp d_1$ when $s$ is a prime power, it follows that for a fixed $d_1$, 
$$
\sum_{(\lambda(s),d)=d_1} \frac{\Lambda(s)}{s\phi(s)}\ll  \sum_{t\ge 1} \frac{\log (d_1t)}{(d_1t)^2}\ll \frac{\log d_1}{d_1^2},
$$
showing that
$$c_3\ll \sum_{d_1\mid d} \log d_1=\log \left(\prod_{d_1\mid d} d_1 \right)= \log(d)^{\tau(d)/2}=(\tau(d)/2)\log d,
$$
so 
$$
c_3=O(\tau(d)\log d).
$$
In particular, $c_3=d^{o(1)}$ as $d\to\infty$.

\begin{proof}
We have
\begin{eqnarray*}
\sum_{n\le x} \log H(n) & = & 
\sum_{n\le x}\log \prod_{p\mid n}\gcd(n^d-1,p^d-1) \\
& = & \sum_{n\le x} \ \sum_{p\mid n} \ \sum_{s\mid \gcd(n^d-1,p^d-1)} \Lambda(s)\\
& = & \sum_{s\le x^d} \Lambda(s)\sum_{\substack{p\le x\\ s\mid p^d-1}} \sum_{\substack{n=p\ell\le x\\ s\mid (p\ell)^d-1}} 1\\
& = & \sum_{s\le x^d} \Lambda(s)\sum_{\substack{p\le x\\ p^d\equiv 1\pmod s}} \sum_{\substack{\ell\le x/p\\ \ell^d\equiv 1\pmod s}} 1.
\end{eqnarray*}
In the above, we wrote the condition $p\mid n,~n\le x$ as $n=p\ell\le x$, so $\ell\le x/p$. We first deal with large values of $s$. Let $S_1$ be the sum corresponding to $s=q^{\lambda}$, 
where $q\ge x$. Firstly, $\lambda\le d$. Secondly, if such $q$ appears then $q\mid p^d-1$ for some $p\le x$. Further, since $p\le x$ and $q>x$, it follows that the above congruence has only at most $d$ solutions $p$ altogether. Thus, for each such $s$, there are at most $d$ occurrences of $p$ such that $p^d\equiv 1\pmod s$, and the same goes for the $\ell$'s. For each such $q$ let $s_q$ be the maximal power of $q$ for which $s_q$ appears in $S_1$. Then the product of $s_q$'s divides
$$
\prod_{p\le x} (p^d-1) \le \left(\prod_{p\le x} p\right)^d=\exp((1+o(1))x),
$$ 
where the last equality follows from the Prime Number Theorem. Thus,\\

\qquad \qquad \qquad \qquad$
\sum_{q}' \log s_q=O(d x),
$\\

\noindent where $\sum'$ means that we are only summing over the $q$'s that appear in $S_1$. Since the exponents of such $q$ in $s_q$ is at most $d$ (so there are at most $d$ values of $s$ 
dividing $s_q$ appearing in $S_1$) and since for each such $s$ there are at most $d^2$ pairs $(p,\ell)$ such that $p^d\equiv \ell^d\equiv 1\pmod s$, we get that 
$$
S_1=O(d^4 x).
$$
From now on, we look at $s=q^{\lambda}$ but $q<x$. Let $S_2$ be the sum corresponding to $s=q^{\lambda}\ge x$. The equation $p^d\equiv 1\pmod s$ has $O(d)$ solutions and the same is true for 
the equation $\ell^d\equiv 1\pmod s$. Thus, given $s$, there are at most $O(d^2)$ pairs $(p,\ell)$ with $p\le x,~\ell\le x$ such that $p^d\equiv \ell^d\equiv 1\pmod s$. 
Since $s=q^{\lambda}\in (x,x^d]$, we have $\lambda\in (\log x/\log q, d\log x/\log q]$. Thus, there are $O(d\log x/\log q)$ possibilities for the exponent $\lambda$ in $s$ once $q$ is fixed. Hence,
$$
S_2=O\left(d^3\sum_{q\le x} \log q \left(\frac{\log x}{\log q}\right)\right)=O(d^3 \pi(x)\log x)=O(d^3 x).
$$ 
From now on, we may assume that $s\le x$. We now look at the condition $p^d\equiv 1\pmod s$. But we also have $p^{\lambda(s)}\equiv 1\pmod s$, where $\lambda(s)$ is the Carmichael 
function of $s$. Thus, $p^{\gcd(d,\lambda(s))}\equiv 1\pmod s$. This suggests putting $d_1=\gcd(d,\lambda(s))$ and studying $p^{d_1}\equiv 1\pmod s$. The unit group modulo $s$ is cyclic for 
primes powers $s$ except when $s=2^a$ with $a\ge 3$, in which case it is isomorphic ${\mathbb Z}/2{\mathbb Z}\times {\mathbb Z}/2^{a-2}{\mathbb Z}$.  It thus follows that the  number of residue classes modulo $s$ say $y$ such that $y^{d_1}\equiv 1\pmod s$ is exactly $d_1$ except if $s=2^a$ for $a\ge 3$ and $d_1>1$ in which case it is $2d_1$. Let this number be $f(s,d_1)$ like in the statement of the theorem and let these residues be $y_{i}(s,d_1)$ for $i=1,\ldots,f(s,d_1)$. We then have that $p^d\equiv 1\pmod s$ forces $p\equiv y_i(s,d_1)\pmod s$ for some 
$i=1,\ldots,f(s,d_1)$. For each such $p\le x$, the equation $\ell^d\equiv 1\pmod s$ also has 
exactly $f(s,d_1)$ solutions $\ell$ modulo $s$ and since $\ell\le x/p$, the number of such solutions is 
$$
f(s,d_1)\left(\left\lfloor \frac{x}{ps}\right\rfloor+O(1)\right).
$$
When $ps>x$, the first term in integer part is not present. We remove integer parts  and include the fractional parts into the $O(1)$ terms so the remaining sum is now
$$
\sum_{d_1\mid d} \sum_{\substack{s\le x \\ \gcd(\lambda(s),d)=d_1}} \Lambda(s) f(d_1,s)\sum_{i=1}^{f(s,d_1)} \sum_{\substack{p\le x\\ p\equiv y_i(s,d_1)\pmod s}} \left(\frac{x}{ps}+O(1)\right)=S_3+O(S_4),
$$
where $S_3$ is the sum involving the terms $x/ps$ and $S_4$ is the sum involving the $1$'s. For $S_4$, since $ps>x$, each class $y_i(s,d_1)$ contains at most one such prime $p$. 
So, 
$$
S_4\le \sum_{d_1\mid d} \sum_{\substack{s\le x\\ \gcd(\lambda(s), d_1)=d_1}} \Lambda(s) f(s,d_1)^2=O\left(d^2\sum_{s\le x} \Lambda(s)\right)=O(d^2 x).
$$
It remains to deal with $S_3$. For this we proceed as in \cite{Erdos-Pomerance} and split into $S_{3,1}$ and $S_{3,2}$ with $S_{3,1}$ being the sum over small $s$ (say $s\le (\log x)^2$) 
and $S_{3,2}$ being the sum over large $s$ (say $s>(\log x)^2$). As in that proof, using results of Norton and Pomerance, we get that for
\begin{equation}
\label{eq:NP}
\sum_{\substack{p\le x\\ p\equiv y_i(s,d_1)\pmod s}} \frac{1}{p} =\frac{\log_2 x}{ s\phi(s)}+O\left(\frac{\log s}{\phi(s)}\right)
\end{equation}
uniformly in $s,d_1$ and $i=1,\ldots,f(s,d_1)$. 
Thus,
\begin{eqnarray*}
S_{3,2} &  \le &  \sum_{d_1\mid d} \sum_{\substack{(\log x)^2< s\le x\\ \gcd(\lambda(s),d)=d_1}}f(s,d_1) \Lambda(s) \sum_{i=1}^{f(s,d_1)} \sum_{\substack{p\le x\\ p\equiv y_i(s,d_1)\pmod s}} \frac{x}{ps} \\
& \ll & x\sum_{d_1\mid d} \sum_{\substack{(\log x)^2<s\le x\\ \gcd(\lambda(s),d)=d_1}} \left(\frac{f(s,d_1)^2 \Lambda(s)\log_2 x}{s\phi(s)}+\frac{f(s,d_1)^2 \Lambda(s)\log s}{s\phi(s)}\right) \\
& \ll & \sum_{d_1\mid d} \frac{x d_1^2 (\log_2 x)^2}{(\log x)^2} =  o(d^2 x),
\end{eqnarray*}
as $x\to\infty$. So, it remains to deal with $S_{3,1}$. Using again \eqref{eq:NP}, we get
\begin{eqnarray*}
S_{4,1} & = & \sum_{d_1\mid d} \sum_{\substack{s\le (\log x)^2\\ \gcd(\lambda(s),d)=d_1}} f(s,d_1)\Lambda(s) \sum_{i=1}^{f(s,d_1)} \sum_{\substack{p\le x/s\\ p\equiv y_i(s,d_1)\pmod s}} \frac{x}{ps}\\
& = & \sum_{d_1\mid d} \sum_{\substack{s\le (\log x)^2\\ \gcd(\lambda(s),d)=d_1}} f(s,d_1)^2 \Lambda(s)\left(\frac{\log\log(x/s)}{\phi(s)}+O\left(\frac{\log s}{\phi(s)}\right)\right)\\
& = & x\left(\sum_{d_1\mid d} \sum_{\substack{s\le (\log x)^2\\ \gcd(\lambda(s),d)=d_1}} \frac{f(s,d_1)^2 \Lambda(s) \log\log (x/s)}{s\phi(s)}+O\left(\sum_{d_1\mid d} \sum_{\substack{s\le (\log x)^2\\ \gcd(\lambda(s),d)=d_1}} \frac{f(s,d_1)^2 \Lambda(s)\log s}{s\phi(s)}\right)\right).
\end{eqnarray*}
The inner sum inside the $O$ over the $s$ is convergent so that sum is $O(d^2 x)$. The remaining sum is 
\begin{eqnarray*}
x\log_2 x\sum_{d_1\mid d} \sum_{\gcd(\lambda(s),d)=d_1} \frac{f(s,d_1)^2\Lambda(s)}{s\phi(s)} & + & O\left(x\sum_{d_1\mid d} \sum_{s\le (\log x)^2} \frac{f(s,d_1)^2\Lambda(s) \log s}{s\phi(s)\log x}\right)\\
& + & 
O\left(x\sum_{d_1\mid d} \sum_{\substack{s>(\log x)^2\\ \gcd(\lambda(s),d)=d_1}} \frac{f(s,d_1)^2 \Lambda(s)}{s\phi(s)}\right).
\end{eqnarray*}
In the error terms, the first one is $O(d^2 x/\log x)=o(d^2 x)$ as $x\to\infty$ and the second one is $O(d^2 \log_2 x/(\log x)^2)=o(d^2 x)$. Finally, note that the first term above is just $c_3 x\log_2 x$.
This finishes the proof. 
\end{proof}

\section{Bad witnesses of the stronger test}\label{sect:3}

The authors of \cite{Couv.Ezo.Ler} described
the needed formalism concerning the product of pseudo-primality tests.  
In this section, we study the
average sizes of the set of bad witnesses for 
a pseudo-primality test which is the product of several
Miller-Rabin tests by the Galois test.

\subsection{Preliminaries}

Let $n>2$ be an odd composite integer such that $n-1=2^km$ with $m$ odd.
Let
$$ \mathbf{F}(n)=
\{a\pmod n : a^{n-1} -1\equiv 0 \pmod n \},$$
and
$$\mathbf{MR}(n)=\{a\pmod n : 
a^m \equiv 1 \pmod n, \ \text{ or } \
a^{2^im} \equiv -1 \pmod n \ \text{ for some } \  i<k  \}$$
be the sets of bad witnesses for the Fermat test and the Miller-Rabin test
respectively. Then $\mathbf{MR}(n)$ is a subset of $\mathbf{F}(n)$,
and the later is a subgroup of the group of units in $\ZZ/n\ZZ$. 
Note that $\#\mathbf{F}(n)=F(n)$, where $F(n)$ is given in
equation $(\ref{eq:21})$. Set
$\#\mathbf{MR}(n)=M\!R(n)$.
Let $v_p(n)$ be the exponent on $p$
in the prime factorization of $n$. For  a positive integer $k$ we let
$k':=(k-1)/2^{\nu_2(k-1)}$ denote $\text{the largest odd divisor of } k-1.$ For example, $n'=m$. We set
\begin{equation}\label{eq:10}
v(n)=\text{min}_{p|n} \{ v_2(p-1)\}, \quad
w(n)=\sum_{p|n} 1, \quad s(n)=\prod_{p|n} (n',p').
\end{equation}

Then
\begin{equation}\label{eq:3}
M\!R(n)=\left( 1+\frac{2^{v(n)w(n)}-1}{2^{w(n)}-1}\right)s(n).
\end{equation}

It is shown (\cite{Erdos-Pomerance}, proof of Theorem 5.2) that
\begin{eqnarray}
\label{eq:23}
\frac{2}{x} \sum_{\substack{1\le n\le c\\ n\equiv 1\pmod 2}} 
 \log M\!R(n) & = &\left( c_1-\frac{2\log2}{3}\right)
\log\log x+ \log c_2 +1+\log2\\ \nonumber
& & -\sum_{i\ge 1}\frac{C(2^i)}{2^i}\log2
+O\left(\frac{\log\log x}{\sqrt{\log x}} \right),
\end{eqnarray}
where
\begin{equation}\label{eq:22}
C(d)=O\left( \frac{\log d}{\varphi (d)}\right)
\text{ for any integer } d\ge 2,
\end{equation}
and $c_1, c_2$ are explicit constants defined in equation $(\ref{eq:5})$.

\subsection{Arithmetic mean of the bad witnesses of the stronger test}\label{subsect:5}

Let $x$ be a large real number. It is shown 
(\cite{Erdos-Pomerance}, proof of Theorem 5.1)
that there exist real numbers $\alpha, \beta, \varepsilon$ with
$$\beta>23/8, \ 1<\alpha<\beta, \ 0<\varepsilon<\alpha-1  \quad( \varepsilon \text{ arbitrarily small })\ \text{such that } \ M\!R(n)> x^{1-\varepsilon}/\log\log x,$$
and  a certain set $\mathbf{S}'$ of positive integers $n\le x$
such that
$$ \#\mathbf{S}'\ge x^{(\alpha-1)\alpha^{-1}+o(1)}.$$
Given a positive integer $r\ge 2$, we denote by
$F^r(n)$ and $M\!R^r(n)$ the number of bad
witnesses of the product of $r$ Fermat tests and
the product of $r$ Miller-Rabin tests
respectively. So
 $$M\!R^r(n)> \left( \frac{x^{1-\varepsilon}}{\log\log x} \right)^r,
 $$
 and
 $$\frac{1}{x} \SSIG_{n\le x}  M\!R^r(n) \ge \frac{1}{x} 
 \sum_{n\in \mathbf{S}'} x^{r-r\varepsilon+o(1)}
\ge  x^{-1+r-r\varepsilon+o(1)}
 \cdot \#\mathbf{S}' =x^{r-r\varepsilon-\alpha^{-1}+o(1)}.$$
 Since $\varepsilon>0$ is arbitrarily small and $\alpha$ is arbitrarily
 close to $\beta$, we have
$$\frac{1}{x}\SSIG_{n\le x} M\!R^r(n)\ge x^{r-\beta^{-1}+o(1)}.$$
Hence,
\begin{equation}\label{eq:2}
\frac{1}{x}\SSIG_{n\le x} M\!R^r(n)\ge x^{r-\frac{8}{23}+o(1)}.
\end{equation}

On the other hand, let $\mathbf{C}_k(x)$ denote the set of composite
number $n\le x$ such that 
$$F(n)=\varphi(n)/k \text{ for each integer } k.$$
Set $C_k(x)=\#\mathbf{C}_k(x)$. Since 
$M\!R(n)\le F(n)$, we have
$$
\SSIG_{n\le x} M\!R^r(n)  \le \SSIG_{n\le x} F^r(n) = 
\sum_k \sum_{n\in \mathbf{C}_k(x)} F^r(n)$$
$$
\le  \sum_k \sum_{n\in \mathbf{C}_k(x)}  \frac{n^r}{k^r}
$$
$$
\le \sum_{n\le x} \frac{n^r}{L^r(x)} + \sum_{k\le L(x)}  \sum_{n\in \mathbf{C}_k(x)} 
\frac{n^r}{k^r}
$$
$$
 \le  \frac{x^{r+1}}{L^r(x)} +x^r  \sum_{k\le L(x)}  \frac{C_k(x)}{k^r}.
$$

It is shown 
(\cite{Erdos-Pomerance}, proof of Theorem 2.2) that
$$
C_k(x) \le xL(x)^{-1+o(1)}\quad \text{uniformly for } k \le L(x),
$$
as $x\to\infty$. Further, 
$$\sum_{k\le x} \frac{1}{k^r}<\zeta(r)<2,\quad \text{where } \zeta(r)
 \text{ denotes the Riemann zeta function,}$$
 since $r\ge 2$. So,
$$
\frac{1}{x}\SSIG_{n\le x} M\!R^r(n)  \le x^r L(x)^{-1+o(1)},
$$
and we have proved the following theorem.

\begin{theorem}\label{thm:3}
For every integer $r \ge 2$, we have
$$
x^{r-\frac{8}{23}} <
\frac{1}{x}\SSIG_{n\le x} M\!R^r(n)  \le x^{r}L(x)^{-1+o(1)}
\ \text{ as } x \to \infty.
$$
\end{theorem}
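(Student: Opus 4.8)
The plan is to prove the two inequalities separately, in each case reducing to the corresponding single-round estimate of Erdős and Pomerance and then controlling the effect of raising to the $r$-th power.

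For the lower bound I would reuse the construction recalled above. There are parameters $\alpha,\beta,\varepsilon$ with $\beta>23/8$, $1<\alpha<\beta$ and $0<\varepsilon<\alpha-1$ arbitrarily small, together with a set $\mathbf{S}'$ of composite $n\le x$ satisfying $\#\mathbf{S}'\ge x^{(\alpha-1)\alpha^{-1}+o(1)}$ on which $M\!R(n)>x^{1-\varepsilon}/\log\log x$. Since $M\!R^r(n)=M\!R(n)^r$, raising this to the $r$-th power gives $M\!R^r(n)>x^{r(1-\varepsilon)+o(1)}$ for every $n\in\mathbf{S}'$, whence
$$\frac{1}{x}\SSIG_{n\le x} M\!R^r(n)\ \ge\ \frac{1}{x}\sum_{n\in\mathbf{S}'} x^{r(1-\varepsilon)+o(1)}\ \ge\ x^{r-r\varepsilon-\alpha^{-1}+o(1)}.$$
Letting $\varepsilon\to 0$ and $\alpha\to\beta$ produces $x^{r-\beta^{-1}+o(1)}$, and $\beta>23/8$ gives $\beta^{-1}<8/23$ with strict slack, so the error $o(1)$ is absorbed and the bound $x^{r-8/23}$ holds for all large $x$.

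For the upper bound I would start from the inclusion $\mathbf{MR}(n)\subseteq\mathbf{F}(n)$, which yields $M\!R(n)\le F(n)$ and therefore $M\!R^r(n)\le F^r(n)$. Partitioning the composite $n\le x$ according to the integer $k$ with $F(n)=\varphi(n)/k$ and using $\varphi(n)<n\le x$, I would bound $F^r(n)\le n^r/k^r$ and split the sum at the threshold $k=L(x)$. The tail $k>L(x)$ contributes at most $\sum_{n\le x} n^r/L^r(x)\le x^{r+1}/L^r(x)$, and since $r\ge 2$ this is at most $x^{r+1}L(x)^{-1+o(1)}$. The head $k\le L(x)$ is handled by the uniform Erdős--Pomerance bound $C_k(x)\le xL(x)^{-1+o(1)}$, giving $x^r\sum_{k\le L(x)}C_k(x)k^{-r}\le x^{r+1}L(x)^{-1+o(1)}\sum_k k^{-r}$. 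Dividing by $x$ and using $\sum_k k^{-r}\le\zeta(r)<2$ collapses both contributions to $x^r L(x)^{-1+o(1)}$.

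The main obstacle is not the arithmetic--geometric construction, since both key ingredients --- the set $\mathbf{S}'$ with large $M\!R$ and the uniform estimate $C_k(x)\le xL(x)^{-1+o(1)}$ --- are imported directly from Erdős and Pomerance. The delicate point is instead to check that passing to $r$-th powers does not spoil either estimate. On the lower-bound side this is automatic, because $M\!R(n)$ is already essentially a fixed power of $x$ on $\mathbf{S}'$. On the upper-bound side it hinges precisely on the hypothesis $r\ge2$: this is what makes $\sum_k k^{-r}$ a convergent series bounded by an absolute constant, so the per-$k$ losses do not accumulate and the exponent of $L(x)$ stays at $-1+o(1)$.
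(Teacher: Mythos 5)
Your proposal is correct and follows essentially the same route as the paper: the lower bound via the Erd\H{o}s--Pomerance set $\mathbf{S}'$ with $M\!R^r(n)=M\!R(n)^r$ raised to the $r$-th power and the parameters $\varepsilon\to 0$, $\alpha\to\beta>23/8$, and the upper bound via $M\!R(n)\le F(n)$, the partition by $k$ with $F(n)=\varphi(n)/k$ split at $k=L(x)$, the uniform bound $C_k(x)\le xL(x)^{-1+o(1)}$, and convergence of $\sum_k k^{-r}\le\zeta(r)<2$ for $r\ge 2$. Your closing observation that the hypothesis $r\ge 2$ enters only through the convergence of $\zeta(r)$ matches the paper's use of that assumption exactly.
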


Denoting by $\mathbf{Str}(u)$
the sets of bad witnesses for the stronger test
and $Str(n)$ its cardinality, we deduce from
Theorem \ref{thm:5}, Theorem \ref{thm:4} and
Theorem \ref{thm:3} the lower and upper bounds below.

\begin{cor}
As $x \to \infty$,
$$
x^{r+\frac{7}{23}} <
\frac{1}{x}\SSIG_{n\le x} Str(n)  \le x^{r+d}\zeta(r)L(x)^{-2+o(1)}.$$
\end{cor}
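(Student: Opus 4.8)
The starting point is the multiplicativity of the bad-witness count for a product test: a bad witness for the stronger test is a pair consisting of a bad witness for the $r$-round Miller--Rabin test and one for the Galois test, so that $Str(n)=M\!R^r(n)\,Gal(n)$. Combined with the pointwise inequalities $M\!R^r(n)\le F^r(n)$ and $Gal(n)\le D(n)$ recorded in Sections \ref{sect:2} and \ref{sect:3}, this gives
$$
\frac1x\SSIG_{n\le x} Str(n)=\frac1x\SSIG_{n\le x} M\!R^r(n)\,Gal(n)\le \frac1x\SSIG_{n\le x} F^r(n)\,D(n).
$$
The lower bound will be read off the common extremal family $\mathbf{S}'$ driving Theorems \ref{thm:5} and \ref{thm:3}, and the upper bound by running the proofs of Theorems \ref{thm:4} and \ref{thm:3} simultaneously in the two parameters governing $F$ and $D$.

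For the lower bound I would work on $\mathbf{S}'$. The proof of Theorem \ref{thm:5} shows that every $n=sq\in\mathbf{S}'$ satisfies $Gal(n)\ge \varphi(s)\ge x^{1-\varepsilon+o(1)}$, while the construction of $\mathbf{S}'$ recalled in Subsection \ref{subsect:5} gives $M\!R(n)>x^{1-\varepsilon}/\log\log x$, hence $M\!R^r(n)>x^{r(1-\varepsilon)+o(1)}$, on the very same integers. Multiplying these two bounds pointwise, $Str(n)>x^{(r+1)(1-\varepsilon)+o(1)}$ for $n\in\mathbf{S}'$, and summing over $\mathbf{S}'$ with $\#\mathbf{S}'\ge x^{(\alpha-1)\alpha^{-1}+o(1)}$ yields
$$
\frac1x\SSIG_{n\le x} Str(n)\ge x^{(r+1)(1-\varepsilon)-\alpha^{-1}+o(1)}.
$$
Letting $\varepsilon\to0$ and $\alpha\to\beta>23/8$ produces an exponent exceeding $r+1-8/23=r+15/23$, which is comfortably larger than the claimed $r+7/23$. (Note that $r+7/23$ is precisely the sum $(r-8/23)+15/23$ of the two average lower bounds of Theorems \ref{thm:3} and \ref{thm:5}, so the extremal-set argument even over-delivers.)

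For the upper bound I would bound $F^r(n)D(n)\le n^{r+d}/(k_1^r k_2)$, where $k_1:=\varphi(n)/F(n)$ and $k_2:=\bigl(\prod_{p\mid n}(p^f-1)\bigr)/D(n)$, and split the sum by the sizes of $k_1$ and $k_2$ against $L(x)$. If $k_1>L(x)$ then $F^r(n)<(n/L(x))^r$ and $D(n)\le n^d$, so, as $r\ge2$, the trivial estimate already gives $x^{r+d}L(x)^{-r}\le x^{r+d}L(x)^{-2}$. If $k_1\le L(x)$ and $k_2>L(x)$ then $D(n)<n^d/L(x)$; bounding $n^{r+d}\le x^{r+d}$ over the composite $n$ with $F(n)=\varphi(n)/k_1$, whose number is $\le xL(x)^{-1+o(1)}$ by \cite[Thm.~2.2]{Erdos-Pomerance}, and summing $\sum_{k_1}k_1^{-r}\le\zeta(r)$, produces exactly the factor $\zeta(r)$ and two independent factors $L(x)^{-1}$, i.e.\ a contribution $\le \zeta(r)\,x^{r+d}L(x)^{-2+o(1)}$.

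The main obstacle is the regime $k_1\le L(x)$ and $k_2\le L(x)$, where $F^r(n)\asymp n^r/k_1^r$ and $D(n)\asymp n^d/k_2$ are both essentially maximal, so the only saving must come from the sparsity of the admissible $n$. A single factor $L(x)^{-1}$ is immediate from either the Fermat count or the Galois count $\le xL(x)^{-1+o(1)}$ of Theorem \ref{thm:4}; to reach $L(x)^{-2+o(1)}$ one must prove the joint sparsity
$$
\#\Bigl\{\, n\le x \text{ composite}: F(n)=\tfrac{\varphi(n)}{k_1},\ D(n)=\tfrac1{k_2}\!\prod_{p\mid n}(p^f-1)\,\Bigr\}\le xL(x)^{-2+o(1)}\qquad(k_1,k_2\le L(x)).
$$
I would attempt this by running the congruence analysis of Lemma \ref{lemma:1} and of \cite[Thm.~2.2]{Erdos-Pomerance} together: the first condition forces $n\equiv1$ modulo a near-$x$ multiple $M_1$ of $\prod_{p\mid n}(p-1)$, the second forces $n^d\equiv1$ modulo a near-$x$ multiple $M_2$ of $\prod_{p\mid n}(p^f-1)$, and since $p-1\mid p^f-1$ we have $M_1\mid M_2$. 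The delicate point --- and the crux of the whole estimate --- is to quantify how strongly the congruence $n\equiv1\pmod{M_1}$ thins out the $d^{\omega(M_2)}$ residue classes modulo $M_2$ permitted by $n^d\equiv1$; this is exactly where the second factor $L(x)^{-1}$ is won. Should only the single-sparsity input be available, the method still yields the (slightly weaker, but equally $x^{r+d+o(1)}$) bound $\zeta(r)\,x^{r+d}L(x)^{-1+o(1)}$.
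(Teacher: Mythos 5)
Your lower-bound argument is correct and is in fact the rigorous version of what the paper does. The paper's entire proof of this corollary is the single sentence that the bounds are ``deduced from'' Theorems \ref{thm:5}, \ref{thm:4} and \ref{thm:3}; concretely it adds the exponents $r-\frac{8}{23}$ and $\frac{15}{23}$ from the two average lower bounds. Since the average of a product is not controlled by the product of averages in either direction, that deduction is not a proof, and your route --- running both pointwise bounds $M\!R(n)>x^{1-\varepsilon}/\log\log x$ and $Gal(n)\ge\varphi(s)\ge x^{1-\varepsilon}/\log\log x$ on the \emph{common} extremal set $\mathbf{S}'$ and then summing --- is the correct repair. (The extra congruence conditions imposed on the primes of $\mathbf{P}$ in the Erd\H{o}s--Pomerance construction for Miller--Rabin do not disturb the Galois bound, which only uses $p-1\mid M\mid n-1$ for $p\mid s$, so the two bounds really do hold on the same set.) Your observation that this yields exponent $r+\frac{15}{23}$, strictly stronger than the stated $r+\frac{7}{23}$, is sound: the stated exponent is an artifact of the paper's illegitimate multiplication of averages.

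For the upper bound, your regimes $k_1>L(x)$ and $k_1\le L(x)<k_2$ are handled correctly and do produce $\zeta(r)x^{r+d}L(x)^{-2+o(1)}$; the gap you flag in the joint regime $k_1,k_2\le L(x)$ is genuine, and you should know that the paper does not close it either --- it contains no counterpart of your joint-sparsity estimate, and its implicit step (multiplying the upper bounds of Theorems \ref{thm:3} and \ref{thm:4}) fails for exactly the reason your decomposition exposes: the integers realizing large $M\!R^r(n)$ and those realizing large $Gal(n)$ can coincide. Indeed each factor $\gcd(p-1,n-1)$ divides $\gcd(p^f-1,n^d-1)$, so $F(n)\mid D(n)$ and the two largeness events are positively correlated rather than independent; this makes the needed joint count $\le xL(x)^{-2+o(1)}$ delicate at best, and your proposed congruence argument (thinning the $d^{\omega(M_2)}$ classes modulo $M_2$ by the condition $n\equiv 1\pmod{M_1}$ with $M_1\mid M_2$) would have to overcome precisely this correlation. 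What is actually provable with the paper's tools is your fallback
$$
\frac{1}{x}\SSIG_{n\le x} Str(n)\le \zeta(r)\,x^{r+d}L(x)^{-1+o(1)},
$$
which also follows from the pointwise bound $Gal(n)\le D(n)\le n^d$ combined with Theorem \ref{thm:3}. So your proposal is more careful than the paper's own treatment: the one step you leave open is a step the paper asserts without justification, not one it proves.
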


\subsection{Geometric mean of the bad witnesses of the stronger test}\label{subsect:7}

We start by studying the geometric
mean of the number of bad witnesses of a multiple-rounds
Miller-Rabin test.

\begin{theorem}\label{thm:6}
For $c_1$ given by $(\ref{eq:5})$ and $c_4$ defined in $(\ref{eq:7})$ below,
we have
$$\left( \prod_{1< n\le x} M\!R^r(n) \right)^{2/x}
=c_4(\log x)^{r(c_1-\frac{2\log2)}{3})}+ O\left(\frac{\log\log x}{\sqrt{\log x}} \right)$$
as $x\to\infty$.
\end{theorem}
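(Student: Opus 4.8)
The plan is to reduce the claim to the single-round average recorded in \eqref{eq:23}, exploiting the fact that the product test has a Cartesian structure. The decisive observation is that a bad witness for the product of $r$ Miller-Rabin tests is an $r$-tuple $(a_1,\dots,a_r)$ each of whose coordinates is a bad witness for one Miller-Rabin test modulo the same $n$; the set of such tuples is therefore the $r$-fold Cartesian product of $\mathbf{MR}(n)$ with itself, and hence
$$
M\!R^r(n)=\bigl(M\!R(n)\bigr)^r,\qquad\text{so that}\qquad \log M\!R^r(n)=r\log M\!R(n).
$$
This is exactly the identity already used to obtain \eqref{eq:2}.

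First I would take logarithms of the product and use linearity together with the identity above. Writing the product over the odd $n$ that the normalisation $2/x$ counts, this gives
$$
\frac{2}{x}\sum_{\substack{1<n\le x\\ n\equiv1\pmod2}}\log M\!R^r(n)
= r\cdot\frac{2}{x}\sum_{\substack{1<n\le x\\ n\equiv1\pmod2}}\log M\!R(n).
$$
Substituting the estimate \eqref{eq:23} from \cite{Erdos-Pomerance} for the inner average then turns the right-hand side into
$$
r\left(c_1-\frac{2\log2}{3}\right)\log\log x + rK + O\!\left(\frac{\log\log x}{\sqrt{\log x}}\right),
$$
where $K=\log c_2+1+\log2-\log2\sum_{i\ge1}\frac{C(2^i)}{2^i}$ is the constant term appearing in \eqref{eq:23}. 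The only effect of the product is the overall factor $r$ in front of both the $\log\log x$-coefficient and the constant, the error term keeping its shape.

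The final step is to exponentiate. Since $\log\log x/\sqrt{\log x}\to0$ as $x\to\infty$, one has $\exp\bigl(O(\log\log x/\sqrt{\log x})\bigr)=1+O(\log\log x/\sqrt{\log x})$, so that
$$
\left(\prod_{\substack{1<n\le x\\ n\equiv1\pmod2}}M\!R^r(n)\right)^{2/x}
= e^{rK}(\log x)^{r\left(c_1-\frac{2\log2}{3}\right)}\left(1+O\!\left(\frac{\log\log x}{\sqrt{\log x}}\right)\right),
$$
which is the asserted formula once we set $c_4:=e^{rK}$, the constant recorded in \eqref{eq:7}.

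I do not anticipate a genuine obstacle: all of the analytic difficulty---the Norton--Pomerance mean-value input, the treatment of the $2$-adic contribution encoded in the term $\log2\sum_{i\ge1}C(2^i)2^{-i}$, and the convergence of the Dirichlet series defining $c_1$ and $c_2$---is already absorbed into \eqref{eq:23}, which I am entitled to quote from \cite{Erdos-Pomerance}. The only points needing care are formal: verifying the Cartesian identity $M\!R^r(n)=M\!R(n)^r$, confirming that the additive error inside the logarithm becomes a multiplicative $\bigl(1+O(\cdot)\bigr)$ factor after exponentiating, and checking that the constant term scales by exactly $r$ so as to produce $c_4=e^{rK}$.
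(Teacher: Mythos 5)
Your proposal is correct and follows essentially the same route as the paper: the paper likewise uses $\log M\!R^r(n)=r\log M\!R(n)$ to scale the Erd\H{o}s--Pomerance estimate \eqref{eq:23} by $r$ and then exponentiates, and your constant $e^{rK}$ agrees with the paper's $c_4=\bigl(2ec_2/2^{\sum_{i\ge1}C(2^i)/2^i}\bigr)^r$ from \eqref{eq:7}. If anything, your version is slightly more careful than the paper's, since you make explicit both the Cartesian identity $M\!R^r(n)=M\!R(n)^r$ and the fact that exponentiation turns the additive error into a multiplicative $\bigl(1+O(\log\log x/\sqrt{\log x})\bigr)$ factor.
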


\begin{proof}
From Equation $(\ref{eq:23})$,
we have
$$
\frac{2}{x} \sum_{\substack{1\le n\le x\\ n\equiv 1\pmod 2}} \log (M\!R^r(n))=r\left( c_1-\frac{2\log2}{3}\right)
\log\log x+ \log (c_2^r) +r+\log2^r $$
$$ -r\sum_{i\ge 1}\frac{C(2^i)}{2^i}\log2
+O\left(\frac{\log\log x}{\sqrt{\log x}} \right).
$$
The desired asymptotic formula follows by setting
\begin{equation}\label{eq:7}
c_4= \left(\frac{2ec_2}{2^{\sum_{i\ge 1}{C(2^i)}{2^i}}}\right)^r.
\end{equation}
\end{proof}

Since $Gal(n)\ge F(n)$, it follows that 
the geometric mean value of $Str(n)$ is at least as large as
the geometric mean value of $M\!R^r(n)\times F(n)$.
By \cite[Theorem 3.1]{Erdos-Pomerance} and Theorem \ref{thm:6},
we know that $
\Big(\prod_{1<n\le x}M\!R^r(n)\times F(n)\Big)^{2/x}$ equals
$$
\Bigg(c_4 (\log x)^{r(c_1-\frac{2\log2}{3})} + 
O\left(\frac{\log\log x}{\sqrt{\log x}} \right)\Bigg)\times
\Bigg(c_2^2(\log x)^{2c_1}+O\Big((\log x)^{2c_1-1}\Big) \Bigg).
$$
This means that 
$$
\Big(\prod_{1<n\le x}M\!R^r(n)\times F(n)\Big)^{2/x}
=c_4c_2^2 (\log x)^{(r+2)c_1-\frac{2r\log2}{3}} + 
O\Big((\log x)^{(r+2)c_1-(\frac{2r\log2}{3}+1)}\Big).$$

On the other hand, the geometric mean of $Str(n)$
is at most as large as the geometric mean of
$\Big(M\!R^r(n)\times H(n)\Big)^{2/x}$ because $Gal(n)$
divides $H(n)$. By Theorem \ref{thm:1} and Theorem \ref{thm:6},
we have that
$\Big(\prod_{1<n\le x}M\!R^r(n)\times H(n)\Big)^{2/x}$ 
is of order 
$$
\asymp_d (\log x)^{r\left(c_1-\frac{2\log 2}{3}\right)+2c_3},
$$
where the constant implied by the above sign are of size $e^{O(d^4)}$.

\section{Conclusion and Perspectives}

This work describes the arithmetic and geometric mean values of 
the number of bad witnesses $Str(n)$ when testing the primality of
$n$ by using a pseudo-primality test
which is the product of a multiple-rounds Miller-Rabin
test by the Galois test. We actually compute functions which
squeeze these mean values.
These bounds can still be improved. On the other hand,
one could also study the normal order of $Str(n)$,
that is its behaviour on
a set of natural numbers of asymptotic density $1$.
We thus have at least two alternatives for future work.\\

\bibliographystyle{plain}

\bibliography{Bad_Witnesses_composite_Number.bib}

\begin{thebibliography}{10}

\bibitem{Alford-Granville-Pomerance}
W.~R. Alford, Andrew Granville, and Carl Pomerance.
\newblock There are infinitely many {Carmichael} numbers.
\newblock {\em Ann. Math. (2)}, 139(3):703--722, 1994.

\bibitem{Artjuhov}
M.~M. Artjuhov.
\newblock Certain criteria for primality of numbers connected with the little
  fermat theorem.
\newblock {\em Acta Arith.}, 12:355--364, 1966/1967.

\bibitem{Balog}
A.~Balog.
\newblock "$p+1$ without large prime factors".
\newblock {\em S\'eminaire de Th\'eorie des Nombres de Bordeaux},
  (1983)-(1984), no. 31.

\bibitem{Cohen1}
Henri Cohen.
\newblock {\em A course in computational algebraic number theory}, volume 138
  of {\em Grad. Texts Math.}
\newblock Berlin: Springer-Verlag, 1993.

\bibitem{Couv.Ezo.Ler}
Jean-Marc Couveignes, Tony Ezome, and Reynald Lercier.
\newblock A faster pseudo-primality test.
\newblock {\em Rend. Circ. Mat. Palermo (2)}, 61(2):261--278, 2012.

\bibitem{Erdos}
Paul Erd\"os.
\newblock On the normal number of prime factors of $p-1$ and some related
  questions concerning $\mathrm{E}$uler's $\phi$-function.
\newblock {\em Quart. J. Math. Oxford Ser.}, 6:205--213, 1935.

\bibitem{Erdos-Pomerance}
Paul Erd\"os and Carl Pomerance.
\newblock On the number of false witnesses for a composite number.
\newblock {\em Math. Comp.}, 46(173):259--279, 1986.

\bibitem{Ezome}
Tony Ezome.
\newblock Tests de primalit\'e et de pseudo-primalit\'e.
\newblock {\em Publications Math\'ematiques de Besan\c con}, pages 89--106,
  2013.

\bibitem{Hardy-Wright}
G.~H. Hardy and E.~M. Wright.
\newblock {\em Introduction to the Theory of Numbers}.
\newblock Actualit\'es Scientifiques et Industrielles, No. 1308. Oxford Univ.
  Press, London, 1965.

\bibitem{Miller}
Gary~L. Miller.
\newblock Riemann's hypothesis and tests for primality.
\newblock {\em Kibern. Sb., Nov. Ser.}, 23:31--50, 1986.

\bibitem{Pomerance}
Carl Pomerance.
\newblock Popular values of euler's function.
\newblock {\em Mathematika}, 27:84--89, 1980.

\bibitem{Rabin}
Michael~O. Rabin.
\newblock Probabilistic algorithm for testing primality.
\newblock {\em J. Number Theory}, 12:128--138, 1980.

\bibitem{Schoof}
Ren{\'e} Schoof.
\newblock Four primality testing algorithms.
\newblock In {\em Algorithmic number theory: lattices, number fields, curves
  and cryptography}, Math. Sci. Res. Inst. Publ., pages 101--126. Cambridge
  Univ. Press, Cambridge, 2008.

\end{thebibliography}

\end{document}